\newtheorem{theorem}{Theorem}
\newtheorem{lemma}[theorem]{Lemma}
\begin{document}

\title{$\mathbb{S}ol^3\times\mathbb{E}^1$-manifolds}

\author{J. A. Hillman }
\address
{{School of Mathematics and Statistics, University of Sydney,}
\newline
Sydney,  NSW 2006, Australia }

\email{jonathan.hillman@sydney.edu.au}

\keywords{lattice; 4-manifold; Seifert fibration; solvable Lie group; 
$\mathbb{S}ol^3\times\mathbb{E}^1$}

\subjclass[2000]{Primary 57M50}

\begin{abstract}
We show that closed $\mathbb{S}ol^3\times\mathbb{E}^1$-manifolds
are Seifert fibred, with general fibre the torus, and base 
one of the flat 2-orbifolds $T, Kb,\mathbb{A},\mathbb{M}b,
S(2,2,2,2), P(2,2)$ or $\mathbb{D}(2,2)$.
\end{abstract}

\maketitle

A closed 4-manifold $M$ is homeomorphic to an infrasolvmanifold if and only if 
$\chi(M)=0$ and $\pi_1(M)$ is torsion free and virtually poly-$Z$, of Hirsch length 4.
Every such group is realised in this way, 
and $M$ is determined up to homeomorphism by $\pi$.
Such manifolds are either mapping tori of self-homeomorphisms of 3-dimensional infrasolvmanifolds or are unions of two twisted $I$-bundles over such 3-manifolds.
(See Chapter 8 of \cite{Hi}.)

There are six families of 4-dimensional infrasolvmanifolds,
corresponding to the geometries $\mathbb{E}^4$, 
$\mathbb{N}il^3\times\mathbb{E}^1$, $\mathbb{N}il^4$, $\mathbb{S}ol_0^4$, 
$\mathbb{S}ol_1^4$ and $\mathbb{S}ol_{m,n}^4$ of solvable Lie type.
The 74 flat 4-manifolds can be listed,
while $\mathbb{N}il^3\times\mathbb{E}^1$- and $\mathbb{N}il^4$-manifolds
(infranilmanifolds of dimension 4) were classified in \cite{De}.
Every torsion-free, virtually poly-$Z$ group of Hirsch length 4 which is not virtually nilpotent
is the fundamental group of a  4-manifold with one of the remaining geometries \cite{Hi07}.
Manifolds with geometry $\mathbb{S}ol_{m,n}^4$ (with $m\not=n$) 
or $\mathbb{S}ol_0^4$ are mapping tori of self-homeomorphisms
of the 3-torus $\mathbb{R}^3/\mathbb{Z}^3$, 
and so may be classified in terms of conjugacy classes of matrices in $GL(3,\mathbb{Z})$.
The relationship between the various $\mathbb{S}ol_{m,n}^4$ geometries 
is not obvious (see page 137 of \cite{Hi}.)
However, when $m=n$ all agree with the product geometry 
$\mathbb{S}ol^3\times\mathbb{E}^1$.
Partial classifications of $\mathbb{S}ol^3\times\mathbb{E}^1$- 
and $\mathbb{S}ol_1^4$-manifolds were given in \cite{Co}.
A complete classification of $\mathbb{S}ol_1^4$-manifolds has recently appeared  \cite{Th12}.

The first section is on notation and terminology,
and \S2 gives some simple observations on subgroups of $GL(2,\mathbb{Z})$ 
with two ends.
In \S3 we show that $\mathbb{S}ol^3\times\mathbb{E}^1$-manifolds 
have canonical Seifert fibrations, with general fibre the torus, 
and base one of the seven flat 2-orbifolds $T, Kb,\mathbb{A},\mathbb{M}b,
S(2,2,2,2), P(2,2)$ or $\mathbb{D}(2,2)$.
The fibration is unique, and so
this suggest a route to the classification of such manifolds, 
in which the key elements are the base $B$,
the action $\alpha$ of $\pi_1^{orb}(B)$ on $N=\pi_1(F)$,
where $F$ is the general fibre, 
and an ``Euler class" in $H^2(\beta;N^\alpha)$.
The manifolds which are mapping tori may also be classified in terms 
of conjugacy classes of automorphisms.
In \S4 we consider the interaction of the Seifert fibrations, 
mapping torus structure and orientability for such manifolds,
but shall not otherwise classify them explicitly.
The others all have base orbifold either $S(2,2,2,2)$,
$P(2,2)$ or $\mathbb{D}(2,2)$.
The orbifold fundamental groups all admit epimorphisms to 
the infinite dihedral group $Z/2Z*Z/2Z$, and so the 
$\mathbb{S}ol^3\times\mathbb{E}^1$-group has a corresponding decomposition 
as a generalized free product with amalgamation.
We use this structure in \S5 to give examples of each of these three types.
In \S6 we sketch why the Seifert approach should suffice to classify
the $\mathbb{S}ol^3\times\mathbb{E}^1$-manifolds with such bases,
but we do not pursue such a classification in detail.

\section{notation and terminology}

If $G$ is a group let $G'$ be its commutator subgroup and let 
$\beta_1(G)$ be the rank of $G/G'$.
Let $I(G)$ be the preimage of the torsion of $G/G'$ in $G$,
and let $\sqrt{G}$ be the Hirsch-Plotkin radical of $G$.
(In the cases of interest below, 
$\sqrt{G}$ is the unique maximal nilpotent normal subgroup of $G$.)
Let $X^2(G)$ be the subgroup generated by squares.
If $x\in{G}$ let $c_x$ be the automorphism induced by conjugation by $x$.
If $H\leq{G}$ let $C_G(H)$ and $N_G(H)$ be the centralizer 
and normalizer of $H$ in $G$, respectively.
In particular, $\zeta{G}=C_G(G)$ is the centre of $G$.
If $G$ is virtually solvable let $h(G)$ be its Hirsch length.

The symbols $G_1,\dots,G_6$  and $B_1,\dots,B_4$ denote the six orientable 
and four non-orientable flat 3-manifold groups, respectively.
(See Chapter 8 of \cite{Hi}.)

Our notation for flat 2-orbifolds is taken from Appendix A of \cite{Mo},
embellished with ``blackboard bold" font for the initial letters 
of the symbols for such orbifolds with reflector curves.
Similarly, $\mathbb{I}$ denotes the reflector interval
(the quotient of $S^1$ by complex conjugation).
(This font is otherwise used for the integers, rationals and real numbers, 
and for the initial letters of names of geometries.
We use italics for the symbols for the associated model spaces, 
in this paper the Lie group $Sol^3\times\mathbb{R}$.)

\section{some lemmas on subgroups of $GL(2,\mathbb{Z})$}

Let $D_\infty=Z/2Z*Z/2Z$ be the infinite dihedral group,
with presentation $\langle{u,v}\mid{u^2=v^2=1}\rangle$.
Recall that a group $G$ has two ends  $\Leftrightarrow$
$G$ has an infinite cyclic subgroup of finite index $\Leftrightarrow$
$G$ has a maximal finite normal subgroup with quotient $\mathbb{Z}$ or $D_\infty$.

\begin{lemma}
Let $F$ be a finite subgroup of $G=GL(2,\mathbb{Z})$.
If $N_G(F)$ is infinite then $F\leq\{\pm{I}\}$.
\end{lemma}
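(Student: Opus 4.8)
The plan is to reduce the statement to a fact about centralisers of single finite-order matrices. Since $F$ is finite, $\mathrm{Aut}(F)$ is finite, so conjugation gives a homomorphism $N_G(F)\to\mathrm{Aut}(F)$ whose kernel is $C_G(F)$; hence $C_G(F)$ has finite index in $N_G(F)$, and it suffices to show that $C_G(F)$ is finite whenever $F\not\leq\{\pm I\}$. In that case $F$ contains an element $A$ of finite order with $A\neq\pm I$, and since $C_G(F)\leq C_G(A)$ it is enough to prove that $C_G(A)$ is finite.

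To this end, recall that a finite-order element of $GL(2,\mathbb{Z})$ has order $1$, $2$, $3$, $4$ or $6$ (the relevant cyclotomic polynomial must have degree at most $2$). So $A$ is diagonalisable over $\mathbb{C}$ and, not being scalar, has two distinct eigenvalues; hence its minimal polynomial $\mu_A$ equals its characteristic polynomial, $A$ is non-derogatory, and $C_{M_2(\mathbb{Q})}(A)=\mathbb{Q}[A]$, a two-dimensional commutative $\mathbb{Q}$-algebra. Therefore $C_G(A)=GL(2,\mathbb{Z})\cap\mathbb{Q}[A]$, which is precisely the unit group of the order $M_2(\mathbb{Z})\cap\mathbb{Q}[A]$ in $\mathbb{Q}[A]\cong\mathbb{Q}[x]/(\mu_A)$, since the inverse of an invertible integral matrix lying in $\mathbb{Q}[A]$ again lies in $\mathbb{Q}[A]$. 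Now $\mu_A$ is one of $x^{2}+1$, $x^{2}\pm x+1$ or $x^{2}-1$: in the first three cases $\mathbb{Q}[A]$ is an imaginary quadratic field, and in the last case $\mu_A$ splits and $\mathbb{Q}[A]\cong\mathbb{Q}\times\mathbb{Q}$. In every case the unit group of any order has rank $0$ (by Dirichlet's unit theorem, the field being totally imaginary; or, for $\mathbb{Q}\times\mathbb{Q}$, because such units lie in $(\mathbb{Z}\times\mathbb{Z})^{\times}=\{\pm1\}^{2}$), so it is finite. Hence $C_G(A)$, and therefore $C_G(F)$ and $N_G(F)$, is finite, a contradiction; so $F\leq\{\pm I\}$.

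The one place where something could genuinely have gone wrong — rather than just routine bookkeeping — is the case $A^{2}=I$: had $\mathbb{Q}[A]$ been a real quadratic field, its orders would have infinite (Pell) unit groups, and then $N_G(F)$ would be infinite with $F$ non-central. What saves us is exactly that $A^{2}=I$ with $A\neq\pm I$ forces $\mu_A=(x-1)(x+1)$ to split over $\mathbb{Q}$; this is also why the conclusion is sharp. An alternative, more hands-on route to the same reduction notes that a finite-order element of $GL(2,\mathbb{R})$ is conjugate into $O(2)$, computes that $C_{GL(2,\mathbb{R})}(A)$ consists of orientation-preserving similarities (when $A$ has order $3$, $4$ or $6$) or is conjugate to the diagonal subgroup (when $A$ has order $2$), and observes that in either description the integrality condition $\det=\pm1$ already forces $C_G(A)$ to be finite.
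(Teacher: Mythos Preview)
Your proof is correct, and the overall reduction is identical to the paper's: both pass from $N_G(F)$ to $C_G(F)$ via the finite quotient $\mathrm{Aut}(F)$, and then bound $C_G(F)$ by $C_G(A)$ for a single non-central finite-order $A\in F$. Where you diverge is in showing $C_G(A)$ is finite. The paper simply lists explicit conjugacy-class representatives for finite-order matrices $A\neq\pm I$ (two of order~$2$ and one each of orders $3$, $4$, $6$) and checks directly that each has finite centraliser in $GL(2,\mathbb{Z})$. You instead identify $C_G(A)$ with the unit group of the order $M_2(\mathbb{Z})\cap\mathbb{Q}[A]$ in the two-dimensional $\mathbb{Q}$-algebra $\mathbb{Q}[A]$, and then invoke Dirichlet's unit theorem (or the trivial computation for $\mathbb{Q}\times\mathbb{Q}$) to see this is finite. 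Your route is more conceptual and explains \emph{why} the order-$2$ case does not produce Pell units, at the cost of importing algebraic-number-theoretic machinery; the paper's route is entirely elementary but leaves the five centraliser computations to the reader.
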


\begin{proof}
If $P\in{F}\setminus\{\pm{I}\}$ then it is conjugate to one of 
 $\left(\begin{smallmatrix}
1&0\\ 
0&-1
\end{smallmatrix}\right)$,
 $\left(\begin{smallmatrix}
0&1\\ 
1&0
\end{smallmatrix}\right)$,
 $\left(\begin{smallmatrix}
0&1\\ 
-1&-1
\end{smallmatrix}\right)$,
 $\left(\begin{smallmatrix}
0&1\\ 
-1&0
\end{smallmatrix}\right)$ or
 $\left(\begin{smallmatrix}
0&1\\ 
-1&1
\end{smallmatrix}\right)$.
(These have orders 2, 2, 3, 4 and 6, respectively.)
In each case $C_G(P)$ is finite, and so $C_G(F)$ is finite.
Since $Aut(F)$ is finite the lemma follows.
\end{proof}

Since we may assume without loss of generality that $-I\in{F}$,
this lemma also follows from the fact that 
$PSL(2,\mathbb{Z})\cong{Z/2Z*Z/3Z}$.

\begin{lemma}
Let $H<GL(2,\mathbb{Z})$ have two ends.
Then either 
\begin{enumerate}
\item$H\cong\mathbb{Z}$; or
\item$H\cong\mathbb{Z}\oplus\langle-I\rangle$; or
\item $H=\langle{A,B}\rangle$ where $A^2=B^2=I$; or
\item $H=\langle{A,B,-I}\rangle$ where $A^2=B^2=I$; or
\item $H=\langle{A,B}\rangle$ where $A^2=-I$ and $B^2=I$; or 
\item $H=\langle{A,B}\rangle$ where $A^2=B^2=-I$.
\end{enumerate}
In each case neither $A$ nor $B$ is $-I$.
\end{lemma}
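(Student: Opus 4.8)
The plan is to combine the structure theory of two-ended groups with the preceding lemma. Recall that a two-ended group $H$ has a unique maximal finite normal subgroup $T$, and that $H/T$ is isomorphic to $\mathbb{Z}$ or to $D_\infty$. Since $T$ is a finite normal subgroup of the infinite group $H\leq GL(2,\mathbb{Z})$, its normalizer in $GL(2,\mathbb{Z})$ contains $H$ and so is infinite; hence $T\leq\{\pm I\}$ by the previous lemma, and therefore $T=1$ or $T=\langle-I\rangle$. Note also that if $T=1$ then $-I\notin H$, since otherwise $\langle-I\rangle$ would be a nontrivial finite normal subgroup.

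First I would treat the case $H/T\cong\mathbb{Z}$. As $\mathbb{Z}$ is free the extension $1\to T\to H\to\mathbb{Z}\to1$ splits, and since $-I$ is central in $GL(2,\mathbb{Z})$ the resulting action of $\mathbb{Z}$ on $T$ is trivial, so $H\cong T\times\mathbb{Z}$. This gives case (1) when $T=1$ and case (2) when $T=\langle-I\rangle$.

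Next, suppose $H/T\cong D_\infty=\langle u,v\mid u^2=v^2=1\rangle$. Choose $A,B\in H$ mapping to $u$ and $v$. Then $\langle A,B,T\rangle=H$, so $H=\langle A,B\rangle$ if $T=1$ and $H=\langle A,B,-I\rangle$ if $T=\langle-I\rangle$. Since $A$ and $B$ map to the nontrivial elements $u,v$ they do not lie in $T=\ker(H\to H/T)$; combined with the remark above, neither $A$ nor $B$ is $-I$ (nor $I$). The squares $A^2,B^2$ lie in $T\subseteq\{\pm I\}$, and $A^2=I$ forces $A$ to have order $2$ while $A^2=-I$ forces $A$ to have order $4$ (similarly for $B$). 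When $T=1$ we must have $A^2=B^2=I$, which is case (3). When $T=\langle-I\rangle$ the unordered pair $\{A^2,B^2\}$ is one of $\{I,I\}$, $\{I,-I\}$, $\{-I,-I\}$, giving cases (4), (5), (6) respectively; in (5) and (6) the element $-I=A^2$ already lies in $\langle A,B\rangle$, while in (4) one checks that $-I\notin\langle A,B\rangle$ (otherwise the infinite dihedral group $\langle A,B\rangle$ would have a nontrivial central element), so there $H\cong D_\infty\times\langle-I\rangle$.

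The step most in need of care is the invocation of the two-ended structure theorem and then verifying that the six patterns for $(T,\,H/T,\,\{A^2,B^2\})$ are exhaustive; beyond that there is essentially no computation, the arithmetic being entirely packaged into the previous lemma. As in the remark after that lemma, one could instead, when $-I\in H$, pass to the image of $H$ in $PGL(2,\mathbb{Z})$ and argue using the free product structure of $PSL(2,\mathbb{Z})$, but the uniform argument above seems cleaner.
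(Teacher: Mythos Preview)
Your argument is correct and follows essentially the same approach as the paper: identify the maximal finite normal subgroup $F$ (your $T$), use the preceding lemma to force $F\leq\{\pm I\}$, and then split into the cases $H/F\cong\mathbb{Z}$ and $H/F\cong D_\infty$. You supply a bit more detail than the paper (the explicit splitting and triviality of the action in the $\mathbb{Z}$ case, and the observation that in case~(4) one has $-I\notin\langle A,B\rangle$), while the paper adds the side remark that order~$2$ forces $\det=-1$ and order~$4$ forces $\det=+1$; neither of these extras is needed for the lemma as stated.
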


\begin{proof}
Let $F$ be the maximal finite normal subgroup of $H$.
Then $F\leq\{\pm{I}\}$, by Lemma 1.
If $H/F\cong\mathbb{Z}$ then (1) or (2) holds.

Suppose that $H/F\cong{D_\infty}$, and 
let $A, B\in{H}$ represent generators of the free factors of $D_\infty$.
Then $A$ and $B$ each have order dividing 4.
Since $AB$ has infinite order, neither $A$ nor $B$ is $-I$.
If $A$ has order 2 then $\det(A)=-1$,
while if $A$ has order 4 then $\det(A)=+1$ and $A^2=-I$,
and similarly for $B$.
Thus if $F=1$ then (3) holds,
while if $F=\{\pm{I}\}$ then (4), (5) or (6) holds.
\end{proof}

Let $\widetilde{D}_\infty=\langle{a,b}\mid{a^2=b^4=1},~ab^2=b^2a\rangle$
be the central extension of $D_\infty$ arising in case (5) of Lemma 2.

\section{seifert fibrations}

Let $\mathcal{S}=Sol^3\times\mathbb{R}$.
Then $\mathcal{S}\cong\mathbb{R}^3\rtimes_\Theta\mathbb{R}$,
where $\Theta(t)=diag[e^t,1,e^{-t}]$,
and $\mathcal{S}$ is the identity component of the isometry group $Isom(\mathbb{S}ol^3\times\mathbb{E}^1)$.
(Its group of path components is ${D_8\times{Z/2Z}}$). 
The nilradical of $\mathcal{S}$ is $\sqrt{\mathcal{S}}\cong\mathbb{R}^3$,
and $\mathcal{S}/\sqrt\mathcal{S}\cong\mathbb{R}$.
(See \cite{Wl}, or page 137 of \cite{Hi}).

\begin{theorem}
Let $M$ be a $\mathbb{S}ol^3\times\mathbb{E}^1$-manifold.
Then $M$ has an essentially unique Seifert fibration, with general fibre $T$
and base $B= T$, $Kb$, $\mathbb{A}$, $\mathbb{M}b$,  $S(2,2,2,2)$, $P(2,2)$
or $\mathbb{D}(2,2)$.
In particular, $\beta_1(M)\leq2$.
\end{theorem}

\begin{proof}
The manifold $M$ is a quotient of the Lie group $\mathcal{S}$ by a lattice 
$\pi=\pi_1(M)$ in $Isom(\mathbb{S}ol^3\times\mathbb{E}^1)$.
The foliation of $\mathcal{S}$ by translates of its commutator subgroup
is preserved by the isometry group, 
and so it induces a canonical foliation on $M$.
The leaf map for this foliation is a Seifert fibration $p:M\to{B}$,
with base $B$ a flat 2-orbifold and general fibre $F$ a flat 2-manifold.
Hence $\pi$ has a normal subgroup $N$ such that 
$\pi/N\cong\beta=\pi_1^{orb}(B)$ is a flat 2-orbifold group.

Conjugation in $\pi$ determines an ``action" homomorphism $\alpha:\beta\to{Out}(N)$.
Since $\pi$ is not virtually nilpotent, $\mathrm{Im}(\alpha)$ is infinite.
Hence $F\cong{T}$, since $Out(\pi_1(Kb))$ is finite.
Therefore $N\cong\mathbb{Z}^2$, and so $Out(N)\cong{GL(2,\mathbb{Z})}$ is virtually free.
Since $\mathrm{Im}(\alpha)$ is an infinite solvable group, 
it must be virtually $\mathbb{Z}$.
Hence $B$ fibres over $S^1$ or $\mathbb{I}$.

Let $\widehat{M}$ be the finite covering space induced from a torus
$\widehat{B}$ covering $B$,
and let $\widehat\pi=\pi_1(\widehat{M})$.
Then $\beta_1(\widehat\pi)=\beta_1(\widehat\pi/N)=2$, 
and so $N$ and $\widehat\pi'$ are commensurable.
Hence $N$ has finite image in $\pi/\pi'$.
In particular, $\beta_1(\pi)=\beta_1(\beta)\leq2$.

Suppose that $q:M\to\overline{B}$ is another Seifert fibration and 
$\overline{N}$ is the fundamental group of the general fibre.
The base $\overline{B}$ is a flat 2-orbifold, since $\pi$ is solvable,
and again must itself fibre over $S^1$ or $\mathbb{I}$.
After passing to a subgroup $\widehat\pi$ of finite index, if necessary, 
we may assume that $\widehat\pi/N\cong\mathbb{Z}^2$.
Since $N$ and $\overline{N}\cap\widehat\pi$ have finite image in $\widehat\pi/\widehat\pi'$,
$N$ and $\overline{N}$ must each be commensurable with $\widehat\pi'\cong\mathbb{Z}^2$.
Thus $N$ and $\overline{N}$ each have finite index in $N\overline{N}$.
Since the groups of flat 2-orbifolds do not have non-trivial finite normal subgroups
it follows that $N=\overline{N}$.
Thus the fibration is unique (up to automorphisms of the base).

Let $\tilde\alpha$ be the composition of 
the projection of $\pi$ onto $\beta$ with the action $\alpha$,
and let $\nu=\tilde\alpha^{-1}(F)$,
where $F$ is the maximal finite normal subgroup of $\mathrm{Im}(\alpha)$.
Since $N<\sqrt\pi\cong\mathbb{Z}^3$, 
we see that $\sqrt\pi\leq\mathrm{Ker}(\tilde\alpha)$.
Since $N$ is central in $\mathrm{Ker}(\tilde\alpha)$, 
which is a torsion-free virtually poly-$Z$ group of Hirsch length 3,
it follows that $\mathrm{Ker}(\tilde\alpha)=\sqrt\pi$,
and $\nu=\sqrt\pi$ or $\nu\cong{G_2}$.
Since $\sqrt\pi/N$ is an abelian normal subgroup of $\beta=\pi/N$
and $\beta$ has no nontrivial finite normal subgroup $\sqrt\pi/N\cong\mathbb{Z}$,
and so $\sqrt\pi\cong{N}\times\mathbb{Z}$.

If $\nu=\sqrt\pi$ then $\nu/N\cong\mathbb{Z}$.
If $\nu\cong{G_2}$ then $N=I(\nu)$, and so we again have $\nu/N\cong\mathbb{Z}$.
In each case $B$ must fibre over $S^1$ or $\mathbb{I}$ with general fibre $S^1$,
and so $B=T$, $Kb$, $\mathbb{A}$, $\mathbb{M}b$,  $S(2,2,2,2)$, $P(2,2)$,
or $\mathbb{D}(2,2)$.
\end{proof}

The existence of such a Seifert fibration is discussed briefly on pages 146 and 176 of  \cite{Hi}.

The torus only fibres over $S^1$, the next three have fibrations 
of both kinds, while the remaining three only fibre over $\mathbb{I}$.
If $\pi/\pi'$ is finite then $B=S(2,2,2,2)$, $P(2,2)$ or $\mathbb{D}(2,2)$,
and in each case the epimorphism from $\beta$ to $\pi_1^{orb}(\mathbb{I})\cong{D_\infty}$ 
is unique up to composition with an automorphism of $\beta$.
(This is easily verified by considering the 
infinite cyclic normal subgroups of $\beta$.)
If $B=S(2,2,2,2)$ or $\mathbb{D}(2,2)$ there is also an essentially unique epimorphism to $D_\infty\times{Z/2Z}$,  
but none to $\widetilde{D}_\infty$.
If $B=P(2,2)$ there is no epimorphism to $D_\infty\times{Z/2Z}$,
but there is one to $\widetilde{D}_\infty$.
There are no actions with image as in case (6) of Lemma 2.

We shall show below that each of  these seven flat 2-orbifolds
is the base of the Seifert fibration of some
$\mathbb{S}ol^3\times\mathbb{E}^1$-manifold.
(In the first version of this paper it was erroneously claimed that 
$P(2,2)$ and all its quotients could be excluded.)
There are two further flat 2-orbifolds which fibre over 1-orbifolds,
namely $\mathbb{D}(2,\overline{2},\overline{2})$ and
$\mathbb{D}(\overline{2},\overline{2},\overline{2},\overline{2})$,
but these do not arise here, by Theorem 3.

We may derive some of the algebraic consequences of the theorem as follows.
The intersection $T(\pi)=\pi\cap\mathcal{S}$ is the
{\it translation subgroup\/} of $\pi$,
and $[\pi:T(\pi)]$ divides 16, 
since 
$Isom(\mathbb{S}ol^3\times\mathbb{E}^1)/\mathcal{S}\cong{D_8\times{Z/2Z}}$. 
Hence $h(T(\pi))=h(\pi)=4$.
Moreover, $\pi\cap\sqrt{\mathcal{S}}=T(\pi)\cap\sqrt{\mathcal{S}}\cong\mathbb{Z}^3$
\cite{Rg}.
Since this is normal in $\pi$, 
we see that $\mathbb{Z}^3\leq\sqrt\pi$.
Elements of $T(\pi)$ not in $\sqrt\mathcal{S}$ act on
$\pi\cap\sqrt{\mathcal{S}}$ via matrices with eigenvalues
$\{\xi,1,\xi^{-1}\}$, where $\xi>1$.
It follows easily that $h(\sqrt\pi)=3$.
Hence $\sqrt\pi\cong\mathbb{Z}^3$, 
since it is torsion-free nilpotent and 
$\pi\cap\sqrt{\mathcal{S}}\leq\sqrt\pi$.
Moreover, $\sqrt\pi\cap{T(\pi)'}=\sqrt\pi\cap\mathcal{S}'\cong\mathbb{Z}^2$.
Let $N$ be the preimage in $\pi$ of the maximal finite normal subgroup of $\pi/\sqrt\pi\cap\pi'$.
Then $N$ is a characteristic subgroup of $\pi$ and $N\cong\mathbb{Z}^2$.
The quotient $\pi/N$ is virtually $\mathbb{Z}^2$,
since $h(\pi/N)=2$,
and since it has no notrivial finite normal subgroup 
it is a flat 2-orbifold group.

Closed $\mathbb{N}il^3\times\mathbb{E}^1$- and $\mathbb{N}il^4$-manifolds 
also have canonical Seifert fibrations.
For these, the images of the fundamental group of the general fibre in $\pi$
are $\zeta\sqrt\pi$ and $\zeta_2\sqrt\pi$ (the second stage of the upper
central series), respectively.
In general, $\mathbb{N}il^3\times\mathbb{E}^1$-manifolds may have many Seifert fibrations,
but in the $\mathbb{N}il^4$ case the fibration is unique.

\section{mapping tori}

Let $\pi$ be the fundamental group of a $\mathbb{S}ol^3\times\mathbb{E}^1$-manifold $M$.
If $\pi/\pi'$ is infinite then $\pi$ is a semidirect product $\kappa\rtimes\mathbb{Z}$,
where $\kappa$ is a torsion-free virtually poly-$Z$ group of Hirsch length 3.
Either $\kappa$ is the group of a $\mathbb{S}ol^3$-manifold 
or $\sqrt\pi\leq\kappa$,
and then $[\kappa:\sqrt\pi]\leq2$,
by Theorem 8.4 of \cite{Hi}.
Such semidirect products may be classified 
in terms of conjugacy classes in $Out(\kappa)$.
In this section we shall consider the interactions between Seifert fibrations, 
mapping tori and orientability for these groups.
We shall consider the groups with $\pi/\pi'$ finite in later sections.

\begin{lemma}
If $\pi\cong\kappa\rtimes\mathbb{Z}$, 
where $\kappa=\mathbb{Z}^3$ or $G_2$ or is a $\mathbb{S}ol^3$-group 
such that $\kappa/\sqrt\kappa\cong\mathbb{Z}$, then $B=T$ or $Kb$.

Conversely, if $B=T$ or $Kb$ and $\sqrt\pi\leq\kappa$ then 
$\pi\cong\sqrt\pi\rtimes\mathbb{Z}$ or $G_2\rtimes\mathbb{Z}$.
\end{lemma}

\begin{proof}
In each case $N<\kappa$ and $\kappa/N\cong\mathbb{Z}$.
Hence $\beta\cong\mathbb{Z}^2$ or $\mathbb{Z}\rtimes_{-I}\mathbb{Z}$ 
and so $B=T$ or $Kb$.

If $B=T$ or $Kb$ and $\sqrt\pi\leq\kappa$ then $\mathrm{Im}(\alpha)$ 
is $\mathbb{Z}$ or $\mathbb{Z}\oplus\langle-I\rangle$, by Lemma 2,
and so $\pi\cong\sqrt\pi\rtimes\mathbb{Z}$ or $G_2\rtimes\mathbb{Z}$.
\end{proof}
 
In Theorem 3 it was shown that $\beta_1(\pi)=\beta_1(\beta)\leq2$,
and clearly $\beta_1(\pi)=2$ if and only if $B=T$.
If so, then $\pi\cong\sqrt\pi\rtimes\mathbb{Z}$ or $G_2\rtimes\mathbb{Z}$,
since $\mathrm{Im}(\alpha)$ is abelian.
The group $\pi$ is also a semidirect product $\sigma\rtimes\mathbb{Z}$,
where $\sigma$ is a $\mathbb{S}ol^3$-group (with $\sigma/\sqrt\sigma\cong\mathbb{Z}$),
in infinitely many ways.
(However, $\pi$ need not be a direct product $\sigma\times\mathbb{Z}$.)
There are orientable examples and non-orientable examples.
(All $T$-bundles over $T$ have been classified, in terms of extension data \cite{SF83}.
However Proposition 3 of \cite{SF83} appears to overlook some cases.)

If $\beta(\pi)=1$ then $B=Kb,\mathbb{A}$ or $\mathbb{M}b$, 
and the splitting of $\pi$ as a semidirect product is unique.
If $B=Kb$ there are orientable and non-orientable examples 
with $\pi\cong\sqrt\pi\rtimes\mathbb{Z}$ and with $\sigma\rtimes\mathbb{Z}$,
where $\sigma$ is a $\mathbb{S}ol^3$-group 
such that $\sigma/\sqrt\sigma\cong\mathbb{Z}$.
(See \S8 of Chapter 8 of \cite{Hi}.)
Conversely, if $\pi\cong\sigma\rtimes\mathbb{Z}$,
where $\sigma/\sqrt\sigma\cong\mathbb{Z}$ then $B=T$ or $Kb$.

\begin{lemma}
Let $\sigma$ be a $\mathbb{S}ol^3$-group such that $\sigma/\sqrt\sigma\cong{D_\infty}$.
Then $\sigma$ is orientable, and automorphisms of $\sigma$ are orientation preserving.
\end{lemma}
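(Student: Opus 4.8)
The plan is to prove the two assertions separately, using throughout that $M=\mathbb{S}ol^3/\sigma$ is a closed aspherical $3$-manifold, that $\sqrt\sigma\cong\mathbb{Z}^2$ is the intersection of $\sigma$ with the abelian nilradical $\mathbb{R}^2$ of $\mathbb{S}ol^3$, and that $\sigma/\sqrt\sigma\cong D_\infty$ acts on $\sqrt\sigma$ through a monodromy $\theta:D_\infty\to GL(2,\mathbb{Z})$ in which $\theta(a)$ is Anosov for $a$ a generating translation (since $a$ acts on $\mathbb{R}^2$ as $\mathrm{diag}(e^c,e^{-c})$, $c\neq0$).

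For orientability, the two free-factor generators $\bar u,\bar v$ of $D_\infty$ are reflections. Consider $\sigma\to Isom(\mathbb{S}ol^3)\to Isom(\mathbb{S}ol^3/\mathbb{R}^2)=Isom(\mathbb{R})$. Its kernel $\kappa$ contains $\sqrt\sigma$, and $\kappa/\sqrt\sigma$ is a normal subgroup of $D_\infty$ whose quotient is dihedral and (being cocompact in $Isom(\mathbb{R})$) infinite dihedral; since $D_\infty$ has no proper normal subgroup with $D_\infty$-quotient and none with infinite cyclic quotient ($D_\infty^{ab}$ being finite), $\kappa=\sqrt\sigma$. Hence lifts $u,v$ of $\bar u,\bar v$ reverse the flow coordinate. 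Neither $\theta(\bar u)$ nor $\theta(\bar v)$ is $\pm I$, else $\theta(a)=\theta(\bar u)\theta(\bar v)$ would have order dividing $4$ and not be Anosov; so $\theta(\bar u),\theta(\bar v)$ are order-$2$ matrices with eigenvalues $1,-1$, hence of determinant $-1$. An element reversing the flow coordinate and acting by determinant $-1$ on $\mathbb{R}^2$ is orientation-preserving on $\mathbb{S}ol^3=\mathbb{R}^3$, and translations in $\sqrt\sigma$ trivially are; hence $\sigma\leq Isom^+(\mathbb{S}ol^3)$ and $M$ is orientable.

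For automorphisms, let $\phi\in Aut(\sigma)$; as $M$ is orientable it suffices to show $\phi$ acts by $+1$ on $H_3(\sigma;\mathbb{Z})\cong\mathbb{Z}$. Being characteristic, $\sqrt\sigma$ and the preimage $\sigma_1$ of the unique index-$2$ cyclic subgroup $\langle a\rangle<D_\infty$ are $\phi$-invariant, so $\phi$ restricts to $\Phi\in GL(2,\mathbb{Z})$ on $\sqrt\sigma$ and induces $a\mapsto a^{\varepsilon}$ ($\varepsilon=\pm1$) on $\langle a\rangle$; from the Lyndon--Hochschild--Serre spectral sequence of $\sqrt\sigma\to\sigma\to D_\infty$ one has $H_3(\sigma;\mathbb{Q})\cong H_1(D_\infty;H_2(\sqrt\sigma;\mathbb{Q}))$, on which $\phi$ acts by $\det\Phi$ (on $H_2(\sqrt\sigma)=\Lambda^2$) times $\varepsilon$ (the action on $H_1$ of $\langle a\rangle$), so the task is to show $\varepsilon\det\Phi=+1$. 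Conjugating in $GL(2,\mathbb{Z})$ we normalise $U:=\theta(\bar u)=\left(\begin{smallmatrix}1&0\\0&-1\end{smallmatrix}\right)$: torsion-freeness forces $U$ to be of this type and not $\left(\begin{smallmatrix}0&1\\1&0\end{smallmatrix}\right)$, since the latter makes $\sqrt\sigma$ the regular $\mathbb{Z}/2$-module, whence $H^2(\langle\bar u\rangle;\sqrt\sigma)=0$, the subgroup $\langle\sqrt\sigma,u\rangle$ splits as $\sqrt\sigma\rtimes\mathbb{Z}/2$, and $\sigma$ acquires a $2$-torsion element. Then $A:=\theta(a)=\left(\begin{smallmatrix}p&q\\r&p\end{smallmatrix}\right)$ (from $\theta(\bar v)=UA$ having order $2$), and the square $\xi\in\sqrt\sigma$ of a lift of $\bar u$ lies in $\mathbb{Z}(1,0)$ with odd first coordinate, so may be taken to be $(1,0)$ (again by torsion-freeness). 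Replacing $\phi$ by its composite with a suitable inner automorphism, which changes neither $\varepsilon\det\Phi$ nor the conclusion, we may assume $\Phi A\Phi^{-1}=A$ and $\Phi U\Phi^{-1}=A^mU$ with $m\in\{0,1\}$.

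If $m=0$, then $\Phi$ centralises $\langle A,U\rangle$, which acts $\mathbb{Q}$-irreducibly with $C_{GL(2,\mathbb{Q})}(\langle A,U\rangle)=\mathbb{Q}$, so $\Phi=\pm I$ and $\varepsilon\det\Phi=+1$. If $m=1$, then $\varepsilon=+1$, and $\Phi U\Phi^{-1}=AU$ with $U^2=I$ gives $\Phi^2=(\det\Phi)A$; comparing traces with $|\mathrm{tr}\,A|>2$ shows $\det\Phi=-1$ can occur only when $A$ has a very special shape (essentially $\Phi=-\tfrac1{2s}(A+I)$), and for such $A$ the condition $(\Phi-I)\xi\in\mathrm{Im}(I+AU)$ — which is precisely what is needed for the lift $\phi(u)$ to satisfy $\phi(u)^2=\phi(u^2)=\Phi\xi$, i.e.\ for $\Phi$ actually to extend over $\sigma$ — fails by a direct calculation. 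Hence $\det\Phi=+1$ and $\varepsilon\det\Phi=+1$ in all cases. The main obstacle is exactly this last point: the constraint on $\Phi$ is not merely that it normalise the monodromy image $\langle A,U\rangle$, but that it be compatible with the (nontrivial, by torsion-freeness) extension class $\xi$; this compatibility — absent for the mapping-torus $\mathbb{S}ol^3$-groups, which do admit orientation-reversing automorphisms — is what forces orientation-preservation, and identifying and verifying it correctly is the heart of the argument.
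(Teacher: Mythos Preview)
Your orientability argument is correct.  For the second assertion, however, the $m=1$ subcase with $\det\Phi=-1$ is where the real content lies, and your treatment of it does not go through.  First, the compatibility condition you write is not the right one: if $\phi(u)=au\cdot n$ then $\phi(u)^2=(au)^2+(I+AU)n$ with $(au)^2=(I+A)\xi_u+U\xi_v$, so the obstruction involves $\xi_v$ as well as $\xi_u$, and the promised ``direct calculation'' is in any case never carried out.  More seriously, no such calculation can succeed, because $\det\Phi=-1$ \emph{does} occur.  Take
\[
U=\begin{pmatrix}1&2\\0&-1\end{pmatrix},\qquad
V=\begin{pmatrix}-1&0\\-2&1\end{pmatrix},\qquad
A=UV=\begin{pmatrix}-5&2\\2&-1\end{pmatrix},
\]
and set $u^2=e_1$, $v^2=e_2$.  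Both $U$ and $V$ are $GL(2,\mathbb{Z})$-conjugate to $\mathrm{diag}(1,-1)$, so the amalgam $\sigma=G_U*_{\sqrt\sigma}G_V$ is torsion-free, and $|\mathrm{tr}\,A|=6$ makes $\sigma$ a genuine $\mathbb{S}ol^3$-group with $\sigma/\sqrt\sigma\cong D_\infty$.  Now $P=\left(\begin{smallmatrix}0&1\\-1&0\end{smallmatrix}\right)$ satisfies $PUP^{-1}=V$, $PVP^{-1}=U$, $P^2=-I$, and one checks that
\[
f(e_1)=-e_2,\quad f(e_2)=e_1,\quad f(u)=v\cdot e_1,\quad f(v)=u
\]
respects every defining relation of $\sigma$, hence is an automorphism.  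Here $\varepsilon=-1$ and $\det P=+1$, so $f$ reverses orientation; in your normalisation this is exactly the $m=1$ case with $\det\Phi=\det(UP)=-1$, and the extension-class condition is satisfied, not violated.

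This is precisely the point at which the paper's own argument is also incomplete: from $Pc_u|_{\sqrt\sigma}P^{-1}=c_v|_{\sqrt\sigma}$ and $Pc_v|_{\sqrt\sigma}P^{-1}=c_u|_{\sqrt\sigma}$ one gets only $P^2=\pm I$, and the line ``Hence $P^2=I$'' excludes $P^2=-I$ without justification.  The example above shows that $P^2=-I$ really can arise for an automorphism of a torsion-free $\sigma$, so the second clause of the lemma appears to be false as stated, and neither your argument nor the paper's can be completed in this case.
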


\begin{proof}
The hypotheses imply that $\sigma/\sigma'$ is finite.
Thus $H_1(\sigma;\mathbb{Q})=0$.
Since $\sigma$ is a $PD_3$-group, $\chi(\sigma)=0$,
Therefore $H_3(\sigma;\mathbb{Q})\not=0$, and so $\sigma$ is orientable.
(This can also be deduced from the fact that if $N\in{GL(2,\mathbb{C})}$ 
is conjugate to $N^{-1}$  then either $\det(N)=1$ or $N^2=1$.)
Let $[\sigma]\in{H_3}(\sigma;\mathbb{Z})$ be a generator.

Let $u$ and $v\in\sigma$ represent generating involutions of $D_\infty$, and let $t=uv$.
Let $f$ be an automorphism of $\sigma$.
Then $f$ restricts to an automorphism of $\sqrt\sigma$, 
and induces an automorphism  of $\sigma/\sqrt\sigma$.
After composition with an inner automorphism of $\sigma$,
if necessary, we may assume that either $f(u)\equiv{u}$ and 
$f(v)\equiv{v}$,
or $f(u)\equiv{v}$ and $f(v)\equiv{u}$ {\it mod} $\sqrt\sigma$.
Let $P=f|_{\sqrt\sigma}$, and suppose that $f(t)\equiv{t}^\epsilon$
{\it mod} $\sqrt\sigma$ .
Then $f_*[\sigma]=\epsilon\det(P)[\sigma]$.

In the first case, $f(t)\equiv{t}$ {\it mod} $\sqrt\sigma$,
while $Pc_u|_{\sqrt\sigma}=c_u|_{\sqrt\sigma}P$ and $Pc_v|_{\sqrt\sigma}=c_v|_{\sqrt\sigma}P$, 
and so $P=I$. 
In the second case, $f(t)\equiv{t^{-1}}$ {\it mod} $\sqrt\sigma$,
while $Pc_u|_{\sqrt\sigma}P^{-1}=c_v|_{\sqrt\sigma}$ 
and $Pc_v|_{\sqrt\sigma}P^{-1}=c_u|_{\sqrt\sigma}$.
Hence $P^2=I$.
Since $c_t=c_uc_v$ and $c_t|_{\sqrt\sigma}$ has infinite order, 
$P\not=\pm{I}$.
Therefore $\det{P}=-1$. 
In each case, $f$ is orientation preserving.
\end{proof}

If $\sigma$ is a $\mathbb{S}ol^3$-group then $Out(\sigma)$ is finite,
by Theorem 8.10 of \cite{Hi}.

\begin{theorem}
Suppose that $B=\mathbb{A}$ or $\mathbb{M}b$.
Then $M$ is orientable 
if and only if $\pi\cong\sigma\rtimes\mathbb{Z}$, 
where $\sigma$ is a $\mathbb{S}ol^3$-group such that 
$\sigma/\sqrt\sigma\cong{D_\infty}$.
If $M$ is non-orientable then $\pi\cong{B_1}\rtimes\mathbb{Z}$.
\end{theorem}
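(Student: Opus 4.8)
The plan is to exploit the mapping torus structure. Since $B=\mathbb{A}$ or $\mathbb{M}b$, the orbifold group $\beta=\pi_1^{orb}(B)$ has first Betti number $1$, so $\beta_1(\pi)=1$ (one has $\beta_1(\pi)=2$ only for $B=T$), and $\pi\cong\nu\rtimes\mathbb{Z}$ where $\nu$ is the kernel of the projection onto the free part of $\pi/\pi'$; this is the unique such splitting. By Theorem 8.4 of \cite{Hi}, either $\nu$ is a $\mathbb{S}ol^3$-group, or $\sqrt\pi\le\nu$ with $[\nu:\sqrt\pi]\le2$, so that $\nu$ is the fundamental group of a flat $3$-manifold with holonomy of order $\le2$. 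First I would identify the projection $\pi\to\mathbb{Z}$ with $\pi\to\beta\to\mathbb{Z}$, the latter map having kernel $D_\infty=\pi_1^{orb}(\mathbb{I})$ on the base; this presents $\nu$ as an extension $1\to N\to\nu\to D_\infty\to1$, and everything is controlled by the monodromy $\theta$ restricted to $D_\infty$.

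For the orientability equivalence I would argue as follows. Suppose $\nu$ is a $\mathbb{S}ol^3$-group. Then $N=\sqrt\nu$: indeed $\sqrt\nu$ is a rank-$2$ abelian normal subgroup, hence contained in the unique maximal such subgroup $N$ with finite index, and $\pi/\sqrt\nu=(\nu/\sqrt\nu)\rtimes\mathbb{Z}$ has no non-trivial finite normal subgroup, forcing $N=\sqrt\nu$. So if $\nu/\sqrt\nu\cong\mathbb{Z}$ then $\pi/N$ is $\mathbb{Z}$-by-$\mathbb{Z}$, whence $B\in\{T,Kb\}$, a contradiction; therefore $\nu/\sqrt\nu\cong D_\infty$, and with $\sigma=\nu$ the manifold $M$ is the mapping torus of a self-homeomorphism of the closed $\mathbb{S}ol^3$-manifold with group $\sigma$. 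By the lemma on such groups, $\sigma$ is orientable and the monodromy preserves orientation, so $M$ is orientable. Conversely, if $\nu$ is a flat $3$-manifold group, then $\nu\ne\mathbb{Z}^3,G_2$ (these force $B\in\{T,Kb\}$), so the holonomy is exactly $\mathbb{Z}/2$ and $\nu$ is non-orientable; since the fibre $F$ of the mapping torus has trivial normal bundle, $w_1(M)|_F=w_1(F)\ne0$, so $M$ is non-orientable. Combined with the converse already noted, this gives: $M$ is orientable if and only if $\pi\cong\sigma\rtimes\mathbb{Z}$ with $\sigma$ a $\mathbb{S}ol^3$-group and $\sigma/\sqrt\sigma\cong D_\infty$.

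It remains to pin down the non-orientable case, where $\nu$ is a non-orientable flat $3$-manifold group with holonomy $\mathbb{Z}/2$ and $N\cong\mathbb{Z}^2$. Here torsion-freeness of $\pi$ is the key. For the two involutions $u,v$ of $D_\infty=\nu/N$ we have $\theta(u)^2=\theta(v)^2=I$; if $\det\theta(u)=-1$ then $\theta(u)$ is a reflection, but it is centralized by the central element of $\beta$ (namely $s$ when $B=\mathbb{A}$, resp. $z^2$ when $B=\mathbb{M}b$), so were that central element to contribute the trace-$>2$ element of $\theta(\beta)$, the finite-subgroup lemma would give $\theta(u)=\pm I$; hence that element must come from $\langle\theta(u),\theta(v)\rangle$, making this subgroup infinite dihedral and $\nu$ a $\mathbb{S}ol^3$-group — excluded; and $\theta(u)=-I$ is impossible since $H^2(\mathbb{Z}/2;\mathbb{Z}^2_{-I})=0$ would give $u$ a torsion lift. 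Hence $\theta(u)=\theta(v)=I$, so $\sqrt\nu=\langle N,uv\rangle\cong\mathbb{Z}^3$; a short computation of the holonomy matrix, constrained by the torsion-freeness condition on the extension cocycle, identifies $\nu\cong B_1$. Finally the mapping-torus monodromy acts on $\nu/N=D_\infty$ trivially if $B=\mathbb{A}$ and by interchanging the two generators if $B=\mathbb{M}b$; tracking how this interacts with the extension data making $\pi$ a $\mathbb{S}ol^3\times\mathbb{E}^1$-group (in particular with the structure of $\pi'\cap\sqrt\pi$) eliminates the second possibility, so $B=\mathbb{A}$.

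The main obstacle is this last step: extracting from the extension/cocycle data that $\nu\cong B_1$ rather than merely a non-orientable flat $3$-manifold group with cyclic holonomy, and — more delicately — ruling out $B=\mathbb{M}b$; this requires careful bookkeeping of the relevant second cohomology groups and of how the $\mathbb{S}ol^3\times\mathbb{E}^1$ hypothesis restricts the monodromy. By contrast the orientability equivalence is almost immediate once one has the mapping torus splitting, the lemma on $\mathbb{S}ol^3$-groups with infinite-dihedral quotient, and the triviality of the normal bundle of a fibre.
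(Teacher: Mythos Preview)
Your overall strategy coincides with the paper's: use $\beta_1(M)=1$ to get the unique splitting $\pi\cong\nu\rtimes\mathbb{Z}$ with $\nu$ an extension of $D_\infty$ by $N$, split into the cases $\nu$ a $\mathbb{S}ol^3$-group versus $\nu$ virtually abelian, and invoke Lemma~7 for the orientable direction. That part of your argument is essentially the paper's proof.

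Where you diverge is precisely the part you flag as the ``main obstacle'', and here the paper's route is far shorter than yours. To identify $\nu\cong B_1$ the paper does \emph{not} analyse $\theta(u),\theta(v)$ or compute extension cocycles; it simply observes that among the two non-orientable flat $3$-manifold groups with holonomy $Z/2Z$, namely $B_1$ and $B_2$, only $B_1$ surjects onto $D_\infty$. Since $\nu$ is such a group (non-orientable by Lemma~6, holonomy $Z/2Z$ since $[\nu:\sqrt\pi]\leq2$ and $\nu\not=\mathbb{Z}^3,G_2$) and $\nu/N\cong D_\infty$, this forces $\nu=B_1$ immediately. Likewise, to exclude $B=\mathbb{M}b$ the paper uses the single fact that automorphisms of $B_1$ do not swap the two generators of $B_1/\zeta B_1\cong D_\infty$; hence the monodromy of the mapping torus fixes the ends of $D_\infty$, which is exactly the condition distinguishing $\mathbb{A}$ from $\mathbb{M}b$. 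No cohomological bookkeeping is needed.

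Your cohomological approach is not wrong in spirit, but it is left as a sketch at the decisive point (``a short computation \dots identifies $\nu\cong B_1$''; ``tracking how this interacts with the extension data \dots eliminates the second possibility''), and carrying it out would require classifying torsion-free central extensions of $D_\infty$ by $\mathbb{Z}^2$ up to isomorphism and then analysing $\mathrm{Out}(B_1)$ anyway. The paper's two one-line observations about $B_2$ and about $\mathrm{Aut}(B_1)$ bypass all of this.
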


\begin{proof}
If $M$ is Seifert fibred over $B=\mathbb{A}$ or $\mathbb{M}b$ then
$\beta_1(M)=\beta_1(\beta)=1$.
Hence there is an unique splitting 
$\pi=\kappa\rtimes_\theta\mathbb{Z}$.
Moreover, $N<\kappa$ and $\kappa/N\cong{D_\infty}$,
since $B=\mathbb{A}$ or $\mathbb{M}b$.  
If $\kappa$ is a $\mathbb{S}ol^3$-group then $N=\sqrt\kappa$,
since $\sqrt\kappa$ is characteristic and $\nu/\sqrt\kappa$ 
has no nontrivial finite normal subgroup.
Since $\kappa$ is orientable and $\theta$ is orientation preserving,
by Lemma 6, $M$ is orientable.

Conversely, if $\pi\cong\sigma\rtimes\mathbb{Z}$, 
where $\sigma$ is a $\mathbb{S}ol^3$-group such that 
$\sigma/\sqrt\sigma\cong{D_\infty}$, then $N=\sqrt\sigma$
and so $\pi/N\cong{D_\infty}\rtimes\mathbb{Z}$.
Hence $B=\mathbb{A}$ or $\mathbb{M}b$.

If $\kappa$ maps onto $D_\infty$ and is virtually abelian then $[\kappa:\sqrt\pi]=2$,
by Theorem 8.4 of \cite{Hi}.
Since $\kappa$ is not $G_2$, by Lemma 5, it must be $B_1$ or $B_2$,
and since $B_2$ does not map onto $D_\infty$,
we must have $\kappa=B_1$.
Hence $N=\zeta{B_1}\cong\mathbb{Z}^2$,
since $\zeta{D_\infty}=1$ and $B_1/\zeta{B_1}\cong{D_\infty}$.
Since $B_1$ is non-orientable, $M$ is non-orientable.
\end{proof}

There are examples of each type allowed by Theorem 7.
For instance, let $\sigma$ be the $\mathbb{S}ol^3$-group with presentation
\[
\langle{x,y,u,v}\mid{xy=yx,}~u^2=x,~uyu^{-1}=y,~v^2=x^3y^{-2}, vxv^{-1}=x^{17}y^{-12},
\]
\[
vyv^{-1}=x^{24}y^{-17}\rangle.
\]
Then $\sqrt\sigma=\langle{x,y}\rangle$ and $\sigma/\sqrt\sigma\cong{D_\infty}$.
We may define an involution $f$ of $\sigma$ by $f(u)=v$, $f(y)=x^4y^{-3}$ and $f(v)=u$.
The groups $\sigma\times\mathbb{Z}$ and $\sigma\rtimes_f\mathbb{Z}$ 
are groups of orientable $\mathbb{S}ol^3\times\mathbb{E}^1$-manifolds
which are Seifert fibred over $\mathbb{A}$ and $\mathbb{M}b$, respectively.

The flat 3-manifold group $B_1$ has a presentation 
\[
\langle{X,y,z}\mid{Xz=zX},~XyX^{-1}=y^{-1},~yz=zy\rangle.
\]
Let $\theta$ and $\psi$ be the automorphisms defined by
$\theta(X)=X^3z^4$, $\theta(y)=y$ and $\theta(z)=X^2z^3$,
and $\psi(X)=Xyz$, $\psi(y)=y^{-1}$ and $\psi(z)=X^2z^3$, 
respectively.
Then $B_1\rtimes_\theta\mathbb{Z}$ and $B_1\rtimes_\psi\mathbb{Z}$
are the groups of non-orientable $\mathbb{S}ol^3\times\mathbb{E}^1$-manifolds
which are Seifert fibred over $\mathbb{A}$ and $\mathbb{M}b$,
respectively.

\section{examples with $\pi/\pi'$ finite}

Suppose now that $\beta_1(\pi)=0$.
Then $\pi/\nu\cong{D_\infty}$, and so $\pi\cong{G*_\nu{H}}$, 
where $\nu$ is the preimage of the maximal finite normal subgroup of
$\mathrm{Im}(\alpha)$ and $[G:\nu]=[H:\nu]=2$.
Moreover, either $\nu=\sqrt\pi$ and $\mathrm{Im}(\alpha)\cong{D_\infty}$
or $\nu\cong{G_2}\cong\mathbb{Z}_{-I}\mathbb{Z}$ and $\mathrm{Im}(\alpha)\cong\widetilde{D}_\infty$ or 
$D_\infty\times{Z/2Z}$.
Since $\nu/N$ is a normal subgroup of $\beta$ it has 
no nontrivial finite normal subgroup.
Therefore if $\nu\cong\mathbb{Z}^3$ then $\nu/N\cong\mathbb{Z}$
and $N$ is a direct summand of $\nu$,
while if $\nu\cong{G_2}$ then either $\nu/N\cong\mathbb{Z}$ and $N=I(\nu)$
or $\nu/N\cong{D_\infty}$.
(However $N$ need not be a canonical subgroup of $\nu$.)

In order to describe our examples clearly, 
we should be more precise about our definitions of
such amalgamated free products.
We shall assume that $\nu$ is given as a subgroup of $G$ 
and that $\phi:\nu\to{H}$ is a monomorphism.
Then we shall write
\[
G*_\phi{H}=G*H/\langle\langle\phi(n)=n,~\forall~n\in\nu\rangle\rangle.
\]

Since $\nu$, $G$ and $H$ are each finite extensions of $\sqrt\pi$,
they are flat 3-manifold groups.
If $G$ and $H$ were both non-orientable then $\pi$ would be orientable, 
and so $\beta_1(\pi)=1+\frac12(\beta_2(\pi)-\chi(\pi))>0$,
contrary to the assumption.
Hence we may assume that $H$ is orientable.
A Mayer-Vietoris argument shows that $\beta_1(G)+\beta_1(H)\leq\beta_1(\nu)$.

If $\nu=\sqrt\pi$ then $G$ and $H$ each have holonomy of order $\leq2$,
and so $\beta(G)$ and $\beta(H)$ are each $>0$.
We may then assume that $H=G_2$ and $G=G_2,B_1$ or $B_2$.
In each case $\nu=\sqrt{G}=\sqrt{H}$.

If $\nu\cong{G_2}$ then we may assume that $H\cong{G_6}$ and
$G\cong{G_2,G_4,G_6}$, $B_3$ or $B_4$.
If $G\cong{G_4},B_3$ or $B_4$ it has an unique subgroup of index 2
which is isomorphic to $G_2$, 
while if $G\cong{G_2}$ or $G_6$ there are three such subgroups,
which are equivalent under automorphisms of $G$.

We shall use the following presentations for these groups:
\[
\mathbb{Z}^3=\langle{x,y,z}\mid{xy=yx},~xz=zx,~yz=zy\rangle,
\]
\[
G_2=\langle{r,y,z}\mid{ryr^{-1}=y^{-1}},~rzr^{-1}=z^{-1},~yz=zy\rangle,
\]
\[
G_4=\langle{t,y,z}\mid{tyt^{-1}=z},~tzt^{-1}=y^{-1},~yz=zy\rangle,
\]
\[
G_6=\langle{r,s}\mid{rs^2r^{-1}=s^{-2}},~sr^2s^{-1}=r^{-2}\rangle,
\]
\[
B_1=\langle{X,y,z}\mid{Xz=zX},~XyX^{-1}=y^{-1},~yz=zy\rangle,
\]
\[
B_2=\langle{X,y,z}\mid{XyX^{-1}=y^{-1}},~XzX^{-1}=yz,~yz=zy\rangle,
\]
\[
B_3=\langle t,Y,z\mid{tYt^{-1}=Y^{-1}},~tz=zt,~ YzY^{-1}=z^{-1}\rangle,
\]
and
\[
B_4=\langle t,Y,z\mid{tYt^{-1}=Y^{-1}z},~ tz=zt,~YzY^{-1}=z^{-1}\rangle.
\]
(Here $r^2\in{G_2}$, $t^4\in{G_4}$, $r^2\in{G_6}$,
$X^2\in{B_1}$, $X^2\in{B_2}$, $t^2\in{B_3}$ and $t^2\in{B_4}$
correspond to $x\in\mathbb{Z}^3$, while $s^2\in{G_6}$, 
$Y^2\in{B_3}$ and $Y^2\in{B_4}$ correspond to $y\in\mathbb{Z}^3$, 
and $(rs)^2\in{G_6}$ corresponds to $z\in\mathbb{Z}^3$.)
In each case, let $A(G)$ be the maximal abelian normal subgroup
of the flat 3-manifold group $G$.

In order to realize the remaining bases $S(2,2,2,2)$,
$P(2,2)$ and $\mathbb{D}(2,2)$, 
it shall suffice to consider the case $\nu\cong\mathbb{Z}^3$.
We shall assume that $A(\nu)$, 
$A(G)$ and $A(H)$ have bases $\{x,y,z\}$, as above.
Clearly $N<A(\nu)\leq{A(G)}\cap{A(H)}$.
Since $G/N$ has $\nu/N\cong\mathbb{Z}$ as an index-2 subgroup,
$G/N\cong\mathbb{Z}$, $\mathbb{Z}\oplus{Z/2Z}$ or $D_\infty$.
If $G\cong{G_2}$ then either $N=I(G)=\langle{y,z}\rangle$ 
or $N\cong\langle{x,w}\rangle$ for some $w\in{I(G)}$,
since $N$ is normal in $G$.
However if $N=I(G)$ then $G$ would act on $N$ via $-I$, and so $\mathrm{Im}(\alpha)$ would be finite.
Hence we may assume that $N\cong\langle{x,y}\rangle$,
and so $G/N\cong{D_\infty}$.

If $G\cong{B_1}$ then either $N\cong\langle{x^az^b,y}\rangle$,
with $(2a,b)=1$, and $G/N\cong\mathbb{Z}$,
or $N\cong\langle{x^az^{2c},y}\rangle$,
with $(2a,c)=1$, and $G/N\cong\mathbb{Z}\oplus{Z/2Z}$,
or $N\cong\langle{x,z}\rangle$, and $G/N\cong{D_\infty}$.
But in the latter case $G$ would act on $N$ via $-I$, and so $\mathrm{Im}(\alpha)$ would be finite.

Similarly, if $B_2$ we find that $G/N$ can be either $\mathbb{Z}$
or $\mathbb{Z}\oplus{Z/2Z}$.
However, $B_2$ does not admit any epimorphisms to $D_\infty$.

Let $G,H\cong{G_2}$, and let $\phi:A(G)\to{A(H)}$ be the isomorphism
given in terms of standard bases 
by the bordered $3\times3$ matrix
$C\oplus[1]=\left(\begin{smallmatrix}
C&0\\ 
0&1
\end{smallmatrix}\right)$,
where $C=\left(\begin{smallmatrix}
1&1\\ 
1&2
\end{smallmatrix}\right)$,
and let $\pi=G*_\phi{H}$.
Then $N=\langle{x,y}\rangle<G$ is normal in $\pi$, 
and $\beta=\pi/N\cong{D_\infty*_\mathbb{Z}D_\infty}$.
Let $u\in{G}$ and $v\in{H}$ correspond to $r\in{G_2}$.
Then the action of $uv$ on $\nu$ by conjugation has matrix 
$\left(\begin{matrix}
3&-2&0\\ 
-4&3&0\\
0&0&1
\end{matrix}\right)$.
The corresponding semidirect product is the fundamental group 
of a mapping torus which is a $\mathbb{S}ol^3\times{E}^1$-manifold.
Hence the overgroup $\pi$ is the fundamental group of 
a $\mathbb{S}ol^3\times{E}^1$-manifold 
which is Seifert fibred over $B=S(2,2,2,2)$.
If we set $G=B_1$ instead, and use the same matrices,
we get an example with $B=\mathbb{D}(2,2)$ instead,
since $\pi/N\cong(\mathbb{Z}\oplus{Z/2Z})*_\mathbb{Z}D_\infty$.
Modifying the $3\times3$ matrix so that the entries of its third column 
are all 1s gives an example with $B=P(2,2)$,
since $\pi/N\cong\mathbb{Z}*_\mathbb{Z}D_\infty$.

Similar examples can be constructed when $\nu\cong{G_2}$.
We then have $G/N\cong\mathbb{Z}$ (and $N=I(G)$) if $G\cong{G_4}$,
$G/N\cong\mathbb{Z}\oplus{Z/2Z}$ if $G=B_3$ or $B_4$, 
and $G/N\cong{D_\infty}$ if $G\cong{G_6}$.
Restriction from $G_2$ to $I(G_2)$ induces an epimorphism from
$Aut(G_2)$ to $Aut(I(G_2))$.
Thus given $C\in{GL(2,\mathbb{Z})}$ and $G=G_4, G_6, B_3$ or $B_4$
there is an embedding of $G_2$ in $G$ whose restriction to $I(G_2)$
has matrix $C$ with respect to the standard bases, and which fixes $t$.
As before, the corresponding groups $\pi=G*_\phi{H}$ are
$\mathbb{S}ol^3\times{E}^1$-groups, 
with $\beta=\pi/N\cong\mathbb{Z}*_\mathbb{Z}D_\infty$,
$(\mathbb{Z}\oplus{Z/2Z})*_\mathbb{Z}D_\infty$ 
or ${D_\infty}*_\mathbb{Z}D_\infty$, respectively.

It is easy to see that every flat 3-manifold group or 
$\mathbb{S}ol^3$-group can be generated by at most three elements,
and hence that every $\mathbb{S}ol^3\times\mathbb{E}^1$-group
requires at most four generators.
This is best possible in general.
If $A=\left(\begin{smallmatrix}
1&2\\ 
2&5
\end{smallmatrix}\right)$ 
then $\sigma=\mathbb{Z}^2\rtimes_A\mathbb{Z}$ is a $\mathbb{S}ol^3$-group 
such that $H_1(\sigma;\mathbb{F}_2)\cong\mathbb{F}_2^3$,
and so $\sigma\times\mathbb{Z}$ is a
$\mathbb{S}ol^3\times\mathbb{E}^1$-group 
that cannot be generated by three elements.
Similarly, if 
$\phi=\left(\begin{smallmatrix}
A&0\\ 
0&1
\end{smallmatrix}\right)\in{GL(3,\mathbb{Z})}$ then $\pi=G_2*_\phi{G_2}$ needs four generators.

\section{outline of the classification in terms of Seifert data}

The subgroup $N$ is characteristic in $\pi$.
Therefore any isomomorphism $f:\pi\to\tilde\pi$ of such groups 
induces isomorphisms $f|_N:N\to\tilde{N}$ and $\bar{f}:\pi/N\to\tilde\pi/\tilde{N}$.
Hence the classification of such groups may be derived from the classification 
of the extensions 
\[
\xi:\quad 1\to{N}\to\pi(\xi)\to\beta\to1.
\]
The ingredients of such a classification are the quotient group $\beta$, 
the action $\alpha:\beta\to{Out(N)}\cong{GL(2,\mathbb{Z})}$ and 
the cohomology class $e(\xi)\in{H^2}(\beta;N^\alpha)$,
where $N^\alpha$ is $N$ considered as a $\mathbb{Z}[\beta]$-module with
module structure determined by $\alpha$.
Given $\beta$, $N$ and $\alpha$, the groups $\pi(\xi)$ and $\pi(\xi')$ are isomorphic 
if and only if $e(\xi')=g_\#e(\xi)$, where $g$ is a $\beta$-linear automorphism of $N$.

The group $\pi(\xi)$ determined by such an extension is the 
fundamental group of a $\mathbb{S}ol^3\times\mathbb{E}^1$-manifold 
if and only if 
it is torsion-free and $\mathrm{Im}(\alpha)$ contains a matrix with trace $>2$.
The torsion condition can be checked by restricting the extension 
to the finite cyclic subgroups of $\beta$. 
In all cases of interest to us, $\beta$ is either torsion-free or is a semidirect product 
$\gamma\rtimes{Z/2Z}$ where $\gamma$ is torsion-free.
The 2-torsion must act via $I$ or $\pm{U}$,
where
$U=\left(\begin{smallmatrix}
1&0\\ 
0&-1
\end{smallmatrix}\right)$,
(and not via $-I$ or 
$\pm\left(\begin{smallmatrix}
0&1\\ 
1&0
\end{smallmatrix}\right)$).

\begin{lemma}
If $\beta\cong\gamma\rtimes{Z/2Z}$ with $\gamma$ torsion-free then $\pi(\xi)$
is torsion-free if and only if $e(\xi|_{Z/2Z})\not=0$ in $H^2(Z/2Z;(\mathbb{Z}^2)^\alpha)$.
\end{lemma}

\begin{proof}
Since the non-trivial finite subgroups of $\beta$ have order 2,
and are all conjugate, $\pi$ is torsion-free if and only if  
any one of these subgroups has torsion-free preimage in $\pi$.
This is an extension of $Z/2Z$ by $\mathbb{Z}^2$,
with action trivial or via $U$,
and the claim follows easily.
\end{proof}

The identification of $N$ with $\mathbb{Z}^2$ and $\pi/N$ with $\beta$ are only 
well-defined up to compositions with automorphisms, and so the same is true for the action 
$\alpha$. 

Let $D(P)$ be the subgroup of $GL(2,\mathbb{Z})$ generated by 
$U$ and $V=PUP^{-1}$, where $P\in{GL(2,\mathbb{Z})}$ 
is such that $UV$ has infinite order.
Then $D(P)\cong{D_\infty}$.
If $\mathrm{Im}(\alpha)\cong{D_\infty}$ then it is generated by elements conjugate to $U$, and so is conjugate to some $D(P)$.
There is a matrix with trace $>2$ in $\mathrm{Im}(\alpha)$
if and only if $UV$ has an eigenvalue $\not=\pm1$. 
The pair of involutions $u,v$ generating $D_\infty$ 
is unique up to interchange and (simultaneous) conjugation.
Therefore $D(P)$ is conjugate to $D(\widetilde{P})$ if and only if 
$\widetilde{P}=U^\delta{P^\epsilon}U^\delta$,
where $\delta=0$ or 1 and $\epsilon=\pm1$.

We may find the epimorphisms from $\beta$ to $D_\infty$
by considering the possible kernels, 
which are normal subgroups of Hirsch length 1.

If $B=S(2,2,2,2)$ then $\beta=\mathbb{Z}^2\rtimes_{-1}{Z/2Z}$,
so every subgroup of $\sqrt\beta=\mathbb{Z}^2$ is normal, 
while normal subgroups with non-trivial torsion have finite index.
In this case the normal subgroups of Hirsch length 1 are infinite cyclic,
and all epimorphisms from $\beta$ to $D_\infty$ 
are equivalent up to composition with an automorphism of $\beta$.
Similarly, all epimorphisms to $D_\infty\times{Z/2Z}$ are equivalent.
On the other hand, this group has no epimorphisms to $\widetilde{D}_\infty$.

If $B=\mathbb{D}(2,2)$ then
$\beta=\langle{j,u,v}\mid{ujv=jvu,~j^2=u^2=v^2=1}\rangle$,
and there are just two maximal normal subgroups of Hirsch length 1, 
namely $\langle{jv}\rangle$ and $\langle{u,(ju)^2}\rangle$.
The quotients by $\langle(jv)^2\rangle$ and $\langle(ju)^2\rangle$ 
are each $D_\infty\times{Z/2Z}$.
In each case, the epimorphisms are inequivalent.
On the other hand, this group has no epimorphisms to $\widetilde{D}_\infty$.

If $B=P(2,2)$ then
$\beta=\langle{s,u}\mid{(us^2)^2=u^2=1}\rangle$,
and there are again just two maximal normal subgroups of Hirsch length 1, 
namely $\langle{s^2}\rangle$ and $\langle{(us)^2}\rangle$.
There is an automorphism that fixes $u$ and swaps $s$ with $us$.
The quotients by $\langle{s^4}\rangle$ and $\langle(us)^4\rangle$
are each $\widetilde{D}_\infty$.
In each case, the epimorphisms are equivalent.
On the other hand, since this group has a 2 generator presentation
it has no epimorphism to $D_\infty\times{Z/2Z}$.

The possible actions $\alpha$ with $\mathrm{Im}(\alpha)\cong{D_\infty}$
and with given kernel may be parametrized by 
matrices $P\in{GL(2,\mathbb{Z})}$ such that
$UPUP^{-1}$ has an eigenvalue $\not=\pm1$, 
modulo inversion and conjugation by $U$.
Similarly for epimorphisms to $D_\infty\times{Z/2Z}$, 
since the $Z/2Z$ direct factor must be generated by $\pm{I}$.
Let $W=\pm\left(\begin{smallmatrix}
0&1\\ 
-1&0
\end{smallmatrix}\right)$.
Then actions with $\mathrm{Im}(\alpha)\cong\widetilde{D}_\infty$
are conjugate to actions generated by $U$ and $PWP^{-1}$,
such that $UPWP^{-1}$ has an eigenvalue $\not=\pm1$, 
modulo conjugation by $U$.

The final stage of the classification is the determination 
of the possible extensions with given base and action, 
modulo automorphisms of the coefficients.
We shall settle for a slightly weaker result.

\begin{theorem}
There are only finitely many $\mathbb{S}ol^3\times\mathbb{E}^1$-groups 
with given base group $\beta$ and action $\alpha$.
\end{theorem}

\begin{proof}
The cohomology groups $H^2(\beta;N^\alpha)$ may be estimated 
by using the LHS spectral sequence
\[
H^p(\beta/\sqrt\beta;H^q(\sqrt\beta;N^\alpha))\Rightarrow
{H^{p+q}(\beta;N^\alpha)}
\] 
for $\beta$ as a extension of the finite group $\beta/\sqrt\beta$ by $\sqrt\beta\cong\mathbb{Z}^2$.
Now $H^0(\sqrt\beta;N^\alpha)=0$, 
since $\alpha(\sqrt\beta)$ contains matrices with no eigenvalue $1$,
and  $H^q(\sqrt\beta;N^\alpha)=0$, for $q>2$.
Hence this spectral sequence has just two nonzero columns,
and so there is an exact sequence
\[
H^1(\beta/\sqrt\beta;H^1(\sqrt\beta;N^\alpha))\to
{H^2(\beta;N^\alpha)}\to{H^0(\beta/\sqrt\beta;H^2(\sqrt\beta;N^\alpha))}.
\]
Since $H^q(\sqrt\beta;N^\alpha)$ is finitely generated,
for all $q$, the first term is finite.
Poincar\'e duality for $\sqrt\beta$ gives
$H^2(\sqrt\beta;N^\alpha)\cong{H_0(\sqrt\beta;{N^\alpha})}$.
This is again finite, 
since $\alpha(\sqrt\beta)$ contains matrices with no eigenvalue $1$,
and so the result follows.
\end{proof}

\end{document}